\begin{document}

\renewcommand{\abstractname}{}
\renewcommand{\thefootnote}{}
\renewcommand{\proofname}{\textbf{Proof}}
\newtheorem{theorem}{Theorem}[section]
\newtheorem{lemma}{Lemma}[section]
\newtheorem{corollary}{Corollary}[section]
\newtheorem{definition}{Definition}[section]
\numberwithin{equation}{section}
\newtheorem{remark}{Remark}[section]
\newtheorem{example}{Example}
\newtheorem{proposition}{Proposition}[section]
\title{  A norm inequality on noncommutative symmetric spaces related to a question of Bourin}
\author{Jinchen Liu\; Kan He\; Xingpeng Zhao \footnote{\small Corresponding author: Xingpeng Zhao; Email: zhaoxingpeng1@sina.com}\\
{Department of Mathematics, Taiyuan University of Technology,}\\ {030024, TaiYuan, Shanxi, China}
}

\date{}
\maketitle
\begin{abstract}
\noindent \textbf{Abstract}~~ In this note, we study a question introduced by Bourin \cite{2009Matrix} and partially solve the question of Bourin.
In fact, for $t\in[0,\frac{1}{4}]\cup[\frac{3}{4},1]$, we show that
$$|||x^{t}y^{1-t}+y^{t}x^{1-t}|||\leq|||x+y|||,$$
where $x,y\in\mathbb{M}_{n}(\mathbb{C})^+$ and $\||\cdot\||$ is the unitarily invariant norm. Moreover, we prove that
the above inequality holds on noncommutative fully symmetric spaces.\\

\noindent\nonumber\textbf{Keywords}~~$\tau$-measurable operators;\ fully symmetric spaces;\ von Neumann algebra

\noindent\nonumber\textbf{Mathematics Subject Classification}~~47A30 $\cdot$
47B15
 $\cdot$ 15A60
\end{abstract}

\section{Introduction}

Throughout this note, we denote the collection of all $n\times n$ complex matrices by $\mathbb{M}_{n}(\mathbb{C})$ and the collection of all positive semidefinite $n\times n$ complex matrices by $\mathbb{M}_{n}(\mathbb{C})^+$. Recall that $\||\cdot\||$ is unitarily invariant on $\mathbb{M}_{n}(\mathbb{C})$, if $|||UAV|||=|||A|||$ for all $A\in{\mathbb{M}_{n}(\mathbb{C})}$ and all unitaries $U,V\in{\mathbb{M}_{n}(\mathbb{C})}$.

For a period of time, Bourin \cite{2009Matrix} was committed to subadditivity of concave functions of positive semidefinite matrices. He posed an interesting conjecture:
\begin{equation}\label{inequality 1}
|||A^pB^q+B^pA^q|||\leq|||A^{p+q}+B^{p+q}|||,
\end{equation}
where $p,q\geq0$ and $A,B\in \mathbb{M}_{n}(\mathbb{C})^{+}$. Obviously, if $A$ and $B$ are replaced by $A^{\frac{1}{p+q}}$ and $B^{\frac{1}{p+q}}$ in Inequality (\ref{inequality 1}) respectively, one can easily check that Inequality (\ref{inequality 1}) is equivalent to the following inequality:
\begin{equation}\label{inequality 2}
\||A^{t}B^{1-t}+B^{t}A^{1-t}\|| \leq\||A+B\||,
\end{equation}
where $t\in[0,1]$.
In the past few decades, more and more authors are interested in Bourin's conjecture. Up to now, this conjecture has not been fully resolved. However, there have been some fruitful outcomes. The readers can refer to \cite{2013TRACE, 2014Trace, 2021Norm, 2022Norm} and the references therein for more details. For example,
 Hayajneh-Hayajneh-Kittaneh \cite{2021Norm} proved
Inequality (\ref{inequality 2}) for $t=\frac{1}{4}$, $\frac{1}{2}$ and $\frac{3}{4}$. Following closely,
Hayajneh-Hayajneh-Kittaneh-Lebaini \cite{2022Norm} made an important improvement on
Inequality (\ref{inequality 2}).
Let $A, B$ be $n\times n$ positive semifinite matrices. For $t\in[0,1]$, it is shown that
\begin{equation}\label{conclusion(6)}
\||A^{t}B^{1-t}+B^{t}A^{1-t}\|| \leq\ 2^{|\frac{1}{2}-t|}||A+B\||
\end{equation}
and
\begin{equation}\label{conclusion(4)}
\||B^{1-t}A^{2t-1}B^{1-t}+A^{1-t}B^{2t-1}A^{1-t}\||\leq2^{2t-1}\||A+B\||, t\in[\frac12, 1].
\end{equation}
Among these works, we are guided by the results due to Hayajneh-Hayajneh-Kittaneh's work \cite{2023A}.
 Specifically, the authors in \cite{2023A}  got an improvement on
Inequality (\ref{conclusion(4)}):
For $t\in[\frac{3}{4},1]$, it is shown that
\begin{equation}\label{conclusion(5)}
\||B^{1-t}A^{2t-1}B^{1-t}+A^{1-t}B^{2t-1}A^{1-t}\||\leq2^{4(t-\frac{3}{4})}\||A+B\||.
\end{equation}
Besides, by using Inequality (\ref{conclusion(4)}), the authors in \cite{2023A} also got an improvement on Inequality (\ref{conclusion(6)}):
\begin{equation}\label{conclusion(7)}
\||A^{t}B^{1-t}+B^{t}A^{1-t}\|| \leq\ 2^{2(t-\frac{3}{4})}||A+B\||, \;\;\;t\in[\frac{3}{4},1],
\end{equation}
and
\begin{equation}\label{conclusion(8)}
\||A^{t}B^{1-t}+B^{t}A^{1-t}\|| \leq\ 2^{2(\frac{1}{4}-t)}||A+B\||, \;\;\;t\in[0,\frac{1}{4}].
\end{equation}

In this note, we show Inequality (\ref{inequality 2}) holds for $t\in[0,\frac{1}{4}]\cup[\frac{3}{4},1]$, which partially resolves the question of Bourin.
 Especially, we will consider Inequality (\ref{inequality 2}) for $t\in[0,\frac{1}{4}]\cup[\frac{3}{4},1]$ on a noncommutative fully symmetric space  $E(\mathcal{M})$. Let $\mathcal{M}$ be a semifinite von Neumann algebra, equipped with a fixed
faithful, normal, semi-finite trace $\tau$, and $E$ is a fully symmetric space defined on $(0,\infty]$. Then the  noncommutative fully symmetric space
is defined as \[E(\mathcal{M}):=\{x\in S(\tau);\mu(x)\in E\} .\]
We equip $E(\mathcal{M})$ with a natural norm
$$||x||_{E(\mathcal{M})}:=||\mu(x)||_{E},x\in E(\mathcal{M}).  $$
 Moreover, by adapting the techniques in \cite{1986Generalized,2020Arithmetic},
we show that: when $0\leq x,y\in  E(\mathcal{M})$,
\begin{equation}\label{conclusion(9)}
||x^{t}y^{1-t}+y^{t}x^{1-t}||_{E(\mathcal{M})}\leq||x+y||_{E(\mathcal{M})}
\end{equation}
 for $t\in[0,\frac{1}{4}]\cup[\frac{3}{4},1]$, where $\mathcal{M}$ is a semifinite von Neumann algebra and  $E$ is a fully symmetric space.

\section{Preliminaries}
In this present paper,
$\mathcal{M}$ will be denoted by a semifinite
von Neumann algebra, equipped with a fixed
faithful, normal, semi-finite trace $\tau$.

We denote the projection
lattice of $\mathcal{M}$ by $P(\mathcal{M})$ and write
$P_f(\mathcal{M}):=\{e\in P(\mathcal{M}): \tau(e)<\infty\}$.
A closed densely defined linear operator $x$ on $\mathcal {H}$ with
domain $D(x)$ is said to be affiliated with $\mathcal {M}$ if and
only if $u^*xu=x$ for all unitary operators $u$ which belong to
the commutant of $\mathcal {M}$. When $x$ is
affiliated with $\mathcal {M}$, $x$ is said to be
$\tau$-measurable if for every $\varepsilon>0$ there exists $e\in
P(\mathcal{M})$ such that $e(\mathcal{H})\subseteq D(x)$ and
$\tau(e^{\perp})<\varepsilon$ (where $e^{\perp }=1-e$). The set of all $\tau$-measurable operators will
be denoted by $L_0(\mathcal{M})$. The set
$L_0(\mathcal{M})$ is a $\ast$-algebra with sum and product
being the respective closure of the algebraic sum and product.
The measure topology in $L_0(\mathcal{M})$ is
the vector space topology defined via the neighbourhood
base $\{V(\varepsilon, \delta): \varepsilon, \delta>0\}$, where
$V(\varepsilon, \delta)=\{x\in L_0(\mathcal{M}):
\tau(e_{(\varepsilon,\infty)}(|x|))\leq \delta\}$ and
$ e_{(\varepsilon,\infty)}(|x|)$ is the spectral projection
of $|x|$ associated with the interval $(\varepsilon, \infty)$.
With respect to the measure topology, $L_0(\mathcal{M})$
is a complete   topological $*$-algebra.

For every $x\in L_0(\mathcal{M})$, there is a unique polar
decomposition $x=u|x|$, where $|x|\in \mathcal{M}^+$
(the positive part of $\mathcal{M}$) and $u$ is a
partial isometry operator.
Let $r(x)=u^*u$ and $l(x)=uu^*$.  We call
$r(x)$ and $l(x)$ the right and left supports of $x$, respectively.
If $x$ is self-adjoint, then $r(x)=l(x)$.
This common projection is
then said to be the support of $x$ and denoted by $s(x)$.

For $x\in L_0(\mathcal{M})$, we define
$$
\lambda_t(x)=\tau(e_{(t, \infty)}(|x|))~~~~~~~~ \mbox{and}~~~~~~~
\mu_t(x)=\inf \{s>0: \lambda_s(x)\leq t\}.
$$
 The
function $t\rightarrow \lambda_t(x)$ is called the distribution function of $x$ and $t\rightarrow
\mu_t(x)$ is the generalized singular number
of $x$. We will denote simply
by $\lambda(x)$ and $\mu(x)$ the functions
$t\rightarrow \lambda_t(x)$  and $t\rightarrow
\mu_t(x)$, respectively(cf. \cite{1986Generalized}).

If $x,y\in L_{0}(\mathcal{M})$, then $x$ is said to be submajorized  $y$, denoted by  $x\prec \prec y$,  if and only if
$$\int_{0}^{a}\mu_{t}(x)dt\leq\int_{0}^{a}\mu_{t}(y)dt,\forall a\geq0.$$

 To achieve our main results, we state some known properties of $\mu(\cdot)$ without proof.
\begin{lemma}\label{lemma 2.1}(\cite{1986Generalized})
Let $x, y\in  L_0(\mathcal{M})$. Then
\begin{enumerate}
\item $\mu_\cdot(|x|)=\mu_\cdot(x)=\mu_\cdot(x^*)$ and $\mu_\cdot(\alpha x)=|\alpha|\mu_\cdot(x)$ for  $\alpha\in \mathbb{C}$;
\item let $f$ be a bounded continuous
 increasing function on $[0, \infty)$ with $f(0)=0$;
Then $\mu_\cdot(f(x))=f(\mu_\cdot(x))$ and $\tau(f(x))=\int_0^{\tau(1)} f(\mu_t(x))dt$;
\item if $0\leq x\leq y$, then $\mu_\cdot(x)\leq\mu_\cdot(y)$;
\item if $a, b\in \mathcal{M}$, then $\mu_\cdot(axb)\leq\|a\|\|b\|\mu_\cdot(x)$;
 \item $\int_0^af(\mu_{s}(x+y))ds\leq\int_0^af(\mu_s(x)+\mu_s(y))ds$, for $a>0$ and any convex increasing function
 $f: \mathbb{R}^+\rightarrow \mathbb{R}$;

 \item $\int_0^af(\mu_{s}(xy))ds\leq\int_0^af(\mu_s(x)\mu_s(y))ds$, for $a>0$ and any increasing function
 $f: \mathbb{R}^+\rightarrow \mathbb{R}$ such that
 $t\rightarrow f(e^t)$ is convex.
\end{enumerate}
\end{lemma}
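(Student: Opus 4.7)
The plan is to verify each of the six items by unwinding the definitions of $\lambda_t(x)$ and $\mu_t(x)$ at the level of the spectral projections of $|x|$, following the template of Fack--Kosaki \cite{1986Generalized}. The guiding idea is that $\mu_t$ admits several equivalent descriptions---via the right-inverse of the distribution function, via compressions by finite-trace projections, and via the Ky Fan-type integrals $\int_0^a \mu_s(x)\,ds$---and each item below becomes natural once the right description is chosen.

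Items (1)--(4) are essentially bookkeeping at the level of spectral data. For (1), $\mu_\cdot(|x|)=\mu_\cdot(x)$ is built into the definition, while $\mu_\cdot(x^*)=\mu_\cdot(x)$ follows from the polar decomposition $x=u|x|$ together with the identical nonzero spectra of $x^*x$ and $xx^*$, with matching trace-multiplicities on every spectral interval; scalar homogeneity is immediate from $|\alpha x|=|\alpha|\cdot|x|$. For (2), I would apply functional calculus: the monotonicity of $f$ and $f(0)=0$ give $e_{(s,\infty)}(f(|x|))=e_{(f^{-1}(s),\infty)}(|x|)$, whence $\mu_t(f(x))=f(\mu_t(x))$, and the trace identity then drops out by a change of variables. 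For (3), the comparison $0\leq x\leq y$ translates into $\tau(e_{(s,\infty)}(x))\leq\tau(e_{(s,\infty)}(y))$ for every $s>0$, and passing to right-inverses gives $\mu_t(x)\leq\mu_t(y)$. For (4), I would combine $(ax)^*(ax)\leq\|a\|^2 x^*x$ with (3) to obtain $\mu_\cdot(ax)\leq\|a\|\mu_\cdot(x)$, and then iterate on the right-hand factor via part (1).

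The hard part will be items (5) and (6), the submajorization inequalities for sums and products, which are the substantive content of the lemma. The natural tool is a Ky Fan-type variational identity of the form
$$\int_0^a \mu_s(x)\,ds \;=\; \sup\bigl\{\tau(|exe'|):\, e,e'\in P(\mathcal{M}),\ \tau(e),\tau(e')\leq a\bigr\},$$
combined with the log-majorization estimate $\mu_\cdot(xy)\prec_{\log}\mu_\cdot(x)\mu_\cdot(y)$ of Horn--Weyl type. Granted this identity, (5) falls out of the trace-norm triangle inequality $\tau(|e(x+y)e'|)\leq\tau(|exe'|)+\tau(|eye'|)$ followed by an extremal choice of $e,e'$. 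For (6) one must convert log-submajorization into the stronger $f$-submajorization, for which the convexity of $t\mapsto f(e^t)$ is exactly the right hypothesis via the classical Hardy--Littlewood--P\'olya-type implication. The genuine technical difficulty is establishing the variational identity in full generality for $\tau$-measurable (as opposed to bounded) operators; this is the heart of \cite{1986Generalized}, and once it is in place the two submajorization inequalities follow cleanly.
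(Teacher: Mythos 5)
The paper gives no proof of this lemma: it is stated explicitly ``without proof'' as a collection of known facts from Fack--Kosaki \cite{1986Generalized}, so there is nothing internal to compare against. Your outline is a faithful reconstruction of the standard arguments in that reference (spectral bookkeeping for (1)--(4), the Ky Fan variational formula plus log-submajorization for (5)--(6)), and the architecture is sound. One small point: as written, your argument for (5) via the triangle inequality $\tau(|e(x+y)e'|)\leq\tau(|exe'|)+\tau(|eye'|)$ only yields the case $f=\mathrm{id}$, i.e.\ $\mu(x+y)\prec\prec\mu(x)+\mu(y)$; to get the stated inequality for a general convex increasing $f$ you still need the Hardy--Littlewood--P\'olya upgrade (weak submajorization is preserved under convex increasing functions), which you invoke only for (6) but is equally needed here.
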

Denote by $S(\tau)$ the collection of all $\tau-$measurable operators. Let $E$ be a linear subspace of $S(\tau)$ equipped with a complete
norm $||\cdot||_{E}.$ Let $(I,m)$ denote the measure space, where $I=(0,a)$ and $a\in(0,\infty)$, equipped with Lebesgue measure $m$. If $\forall f\in E$, $ g\in L_{0}(I,m)$ with $g\prec\prec f$, then $||g||_{E}\leq||f||_{E}$ and $g\in E$. We say that $E$ is a fully symmetric space.

Let $E$ be a fully symmetric Banach function space on $(0,\tau(1))$. Define
\[E(\mathcal{M}):=\{x\in S(\tau);\mu(x)\in E\} \]
and equip $E(\mathcal{M})$ with a natural norm
$$||x||_{E(\mathcal{M})}:=||\mu(x)||_{E},x\in E(\mathcal{M}).  $$
Then $E(\mathcal{M})$ is a Banach space with the norm $||\cdot||_{E(\mathcal{M})}$ and is called the noncommutative
 fully symmetric operator space associated with $\mathcal{M}$ corresponding to $(E,||\cdot||_{E})$ (see \cite{2008Symmetric, 2012Singular}).
 In the following discussion, $E$ and $\mathcal{M}$ stand for a fully symmetric space and a semifinite
von Neumann algebra respectively defined on $(0,\tau(1))$ unless otherwise noted.

\section{Main results}\label{section3}

To achieve our main results, we start with some lemmas which will be used in later proof.

Let $0\leq x, y\in L_0(\mathcal{M})$ and $z\in\mathcal{M}$. For convenience, set
$$b_t=x^ty^{1-t}+y^{t}x^{1-t},~~~B_t=x^{t}zy^{1-t}+y^tz^*x^{1-t}, t\in[0,1]$$
and if $t\in[\frac{1}{2},1]$, put
$$f_t=y^{1-t}x^{2t-1}y^{1-t}+x^{1-t}y^{2t-1}x^{1-t}$$
and
$$F_t=y^{1-t}z^{*}x^{2t-1}zy^{1-t}+x^{1-t}zy^{2t-1}z^*x^{1-t}.$$
Then we have a result described by the following lemma. It should be noted that its proof is exactly the same to matrix case \cite[Lemma 2.1]{2021Norm} by using \cite[Lemma 3.32]{2021Connections}.
\begin{lemma}\label{lemma3.1}
Let $0\leq x, y\in L_{0}(\mathcal{M})$, and let $z\in \mathcal{M} $. If  $ t\in[\frac{1}{2},1]$, then
$$\begin{bmatrix} x+y & B_t\\ B_{t}^* & F_t\\
\end{bmatrix} \geq 0.$$
Moreover,  $B_{t}=(x+y)^{\frac{1}{2}}c(F_t)^{\frac{1}{2}}$  for some contraction $c\in\mathcal{M}$.
\end{lemma}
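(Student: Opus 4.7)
The plan is to mirror the matrix-case argument of \cite[Lemma 2.1]{2021Norm} by exhibiting an explicit operator factorization that makes the positivity of the block matrix manifest, and then to read off the contraction representation from a Douglas-type lemma for $\tau$-measurable operators.

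First, I would look for a $2\times 2$ block operator $M$ with
$$M^*M=\begin{bmatrix} x+y & B_t\\ B_{t}^{*} & F_t\end{bmatrix}.$$
Since $t\in[\tfrac12,1]$, the exponents $t-\tfrac12$ and $2t-1$ are nonnegative, so $x^{t-1/2}$, $y^{t-1/2}$, $x^{2t-1}$, $y^{2t-1}$ are well-defined positive operators affiliated with $\mathcal{M}$. The natural guess, obtained by splitting each summand of $B_t$ through a $\tfrac12$-power on the left, is
$$M=\begin{bmatrix} x^{1/2} & x^{t-1/2}zy^{1-t} \\ y^{1/2} & y^{t-1/2}z^{*}x^{1-t}\end{bmatrix}.$$
A direct computation of $M^*M$ then yields $x+y$ in the $(1,1)$-block, $x^{t}zy^{1-t}+y^{t}z^{*}x^{1-t}=B_t$ in the $(1,2)$-block, and
$y^{1-t}z^{*}x^{2t-1}zy^{1-t}+x^{1-t}zy^{2t-1}z^{*}x^{1-t}=F_t$ in the $(2,2)$-block. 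Positivity of the block matrix is then immediate because it equals $M^*M$.

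Second, to obtain $B_t=(x+y)^{1/2}c(F_t)^{1/2}$, I would apply the noncommutative block-matrix factorization principle: any positive $2\times 2$ block matrix $\begin{bmatrix} P & X\\ X^{*} & Q\end{bmatrix}$ with $P,Q\ge 0$ admits $X=P^{1/2}cQ^{1/2}$ for a contraction $c$. In the present setting this is precisely \cite[Lemma 3.32]{2021Connections}, which is the $\tau$-measurable analogue of the finite-dimensional fact used in \cite[Lemma 2.1]{2021Norm}. Applying it to $P=x+y$, $Q=F_t$, $X=B_t$ finishes the proof.

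The main obstacle, and the reason the reference to \cite[Lemma 3.32]{2021Connections} is needed rather than just a citation of the matrix case, is ensuring that the formal products defining the entries of $M$ and $M^*M$ are legitimate operations in $L_0(\mathcal{M})$: $x$ and $y$ are only $\tau$-measurable, so one must interpret the fractional powers and their products/closures within the algebra of affiliated operators and check that the usual algebraic identities survive the passage to closures. Once this is handled by the general theory of $\tau$-measurable operators, together with the fact that $z\in\mathcal{M}$ is bounded and sits safely between the affiliated powers, the factorization $M^*M$ and the Douglas-type representation transfer from the matrix case verbatim.
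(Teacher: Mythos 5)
Your proposal is correct and follows essentially the same route as the paper: your block operator $M$ is exactly the adjoint of the factor the paper uses, so your $M^*M$ is literally the paper's $MM^*$, and both arguments then invoke \cite[Lemma 3.32]{2021Connections} to extract the contraction $c$ with $B_t=(x+y)^{1/2}c\,F_t^{1/2}$. Your added remarks on interpreting the fractional powers and products in $L_0(\mathcal{M})$ are a reasonable elaboration of what the paper leaves implicit.
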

\begin{proof} An elementary calculation shows that
\begin{align}
\begin{bmatrix} x+y & B_t\\ B_{t}^* & F_t\\ \end{bmatrix}
&=\begin{bmatrix} x^{\frac{1}{2}} & y^{\frac{1}{2}}\\
y^{1-t}z^*x^{\frac{2t-1}{2}} & x^{1-t}zy^{\frac{2t-1}{2}}\\
\end{bmatrix}
\begin{bmatrix} x^{\frac{1}{2}} & y^{\frac{1}{2}}\\ y^{1-t}z^*x^{\frac{2t-1}{2}} & x^{1-t}zy^{\frac{2t-1}{2}}\\ \end{bmatrix}^{*}\geq{0}.\nonumber
\end{align}
By applying \cite[Lemma 3.32]{2021Connections}, we can get $B_{t}=(x+y)^{\frac{1}{2}}c(F_t)^{\frac{1}{2}}$  for some contraction $c\in\mathcal{M}$.
\end{proof}

The following lemma can be found in \cite{2019Logarithmic} and \cite{2020Arithmetic}.
\begin{lemma}\label{lemma 3.2}
Let $x, y\in  L_0(\mathcal{M})$. Then
\begin{enumerate}

\item (\cite[Proposition 5.6]{2019Logarithmic}) let $xy\in E(\mathcal{M})$. If $m>1$, $\frac{1}{m}+\frac{1}{n}=1$, then $$||xy||_{E(\mathcal{M})}\leq|| |x|^m ||_{E(\mathcal{M})}^{\frac{1}{m}} || |y|^n ||_{E(\mathcal{M})}^{\frac{1}{n}};$$
\item (\cite[Corollary 5.8]{2020Arithmetic}) let $z\in L_{0}(\mathcal{M})$ with $x, y\geq0$ and $t\in[0,1]$.
If $xz, zy\in E(\mathcal{M})$, then
$$2||x^\frac{1}{2}zy^\frac{1}{2}||_{E(\mathcal{M})}\leq||x^{t}zy^{1-t}+x^{1-t}zy^{t}||_{E(\mathcal{M})}\leq||xz+zy||_{E(\mathcal{M})}$$
and
$$||x^{t}zy^{1-t}||_{E(\mathcal{M})}\leq||xz||_{E(\mathcal{M})}^{t}||zy||_{E(\mathcal{M})}^{1-t}.$$

\end{enumerate}
\end{lemma}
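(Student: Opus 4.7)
The plan is to dispatch Lemma 3.2 by invoking the cited references \cite{2019Logarithmic, 2020Arithmetic}, since both statements are already established there. For completeness, I sketch how one would derive each part directly within the framework of Section 2.

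For the H\"older inequality in part (1), the plan is to combine the identity $\mu(|x|^m) = \mu(x)^m$ from Lemma \ref{lemma 2.1}(2) with the submajorization $\int_0^a \mu_s(xy)\,ds \leq \int_0^a \mu_s(x)\mu_s(y)\,ds$ obtained from Lemma \ref{lemma 2.1}(6) (taking $f(u) = u$). Applying Young's inequality $\alpha\beta \leq \alpha^m/m + \beta^n/n$ pointwise to $\mu(x)\mu(y)$ transfers the estimate to the commutative function space $E$, where the full-symmetry assumption yields $\|xy\|_{E(\mathcal{M})} \leq \frac{1}{m}\||x|^m\|_{E(\mathcal{M})} + \frac{1}{n}\||y|^n\|_{E(\mathcal{M})}$. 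The product form then follows by the standard scaling trick $x \mapsto \lambda x$, $y \mapsto \lambda^{-1} y$ optimized in $\lambda > 0$.

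For part (2), the plan is to apply complex interpolation to the operator-valued function $F(\zeta) = x^{\zeta} z y^{1-\zeta}$ on the closed strip $0 \leq \operatorname{Re}\zeta \leq 1$. The outer inequality of the first chain is a Heinz-type estimate that follows from the three-lines theorem applied to functionals attaining the $E(\mathcal{M})$-norm through the K\"othe duality, combined with the boundary identifications $F(0) = zy$ and $F(1) = xz$; the middle inequality is the noncommutative Heinz means inequality whose standard derivation exploits convexity of $t \mapsto \|F(t) + F(1-t)\|_{E(\mathcal{M})}$ on $[0,1]$ with minimum attained at $t = 1/2$. The log-convexity bound $\|x^{t}zy^{1-t}\|_{E(\mathcal{M})} \leq \|xz\|_{E(\mathcal{M})}^{t}\|zy\|_{E(\mathcal{M})}^{1-t}$ is a direct application of the scalar three-lines theorem to $\zeta \mapsto \log \|F(\zeta)\|_{E(\mathcal{M})}$, once the boundary bounds $\|F(it)\|_{E(\mathcal{M})} = \|zy\|_{E(\mathcal{M})}$ and $\|F(1+it)\|_{E(\mathcal{M})} = \|xz\|_{E(\mathcal{M})}$ (which use that imaginary powers are unitary and hence norm-preserving via Lemma \ref{lemma 2.1}(1)) are verified.

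The main obstacle is justifying analyticity and sub-exponential growth of $F$ on the closed strip when $x, y$ are only $\tau$-measurable and possibly unbounded, since bare functional calculus for fractional powers need not preserve $E(\mathcal{M})$. One handles this by a standard regularization: replace $x, y$ by spectral truncations $x_n = x \, e_{(1/n, n)}(x) + (1/n) e_x$ (and similarly for $y$) which are bounded and invertible on their supports, establish the inequalities for $x_n, z, y_n$ where holomorphy on the strip is routine, and then pass to the limit as $n \to \infty$ using the Fatou-type lower semicontinuity of $\|\cdot\|_{E(\mathcal{M})}$ together with the order-continuity properties implicit in the fully symmetric structure of $E$.
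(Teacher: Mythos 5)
The paper gives no proof of this lemma at all --- it simply cites Proposition 5.6 and Corollary 5.8 of the two Dodds--Dodds--Sukochev--Zanin papers, exactly as your opening sentence does, so your primary move coincides with the paper's. Your supplementary sketches are plausible in outline (though the cited sources in fact establish part (2) via submajorization estimates for $\mu(\cdot)$ rather than complex interpolation, which sidesteps the analyticity issues you flag for unbounded $\tau$-measurable operators), but since the paper offers no argument of its own there is nothing to compare them against.
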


In the following, we denote by $\mathbb{M}_2(\mathcal{M})$ the semifinite von Neumann algebra
\[
\mathbb{M}_2(\mathcal{M}):=\left\{ \begin{bmatrix} x_{11} & x_{12}\\ x_{21} & x_{22}\\ \end{bmatrix}: x_{ij}\in\mathcal{M}, i,j=1,2\right\}
\]
on a Hilbert space $\mathcal{H}\oplus \mathcal{H}$ with the trace $tr\otimes\tau.$

\begin{lemma}\label{lemma3.6}
Let $0\leq x,y\in L_{0}(\mathcal{M})$. Then
$$\mu_\cdot(x\oplus y)\prec\prec\mu_\cdot(x+y). $$
\end{lemma}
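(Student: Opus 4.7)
The plan is to realize $x+y$ as $A^*A$ for a suitable $2\times 2$ operator matrix $A$ whose associated $AA^*$ has diagonal blocks $x$ and $y$, and then combine Lemma~\ref{lemma 2.1} with a self-adjoint unitary conjugation (a two-block pinching) to conclude the submajorization.

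First, I would set $A := \begin{pmatrix} x^{1/2} & 0 \\ y^{1/2} & 0 \end{pmatrix} \in L_{0}(\mathbb{M}_2(\mathcal{M}))$. A direct matrix computation gives $A^*A = (x+y) \oplus 0$ and $AA^* = \begin{pmatrix} x & x^{1/2}y^{1/2} \\ y^{1/2}x^{1/2} & y \end{pmatrix}$. Since $e_{(\alpha,\infty)}(A^*A) = e_{(\alpha,\infty)}(x+y)\oplus 0$ for every $\alpha>0$, it follows that $\mu_s(A^*A)=\mu_s(x+y)$ for all $s>0$. Together with the standard identity $\mu_s(A^*A)=\mu_s(AA^*)$ -- which follows from Lemma~\ref{lemma 2.1}(1) applied to $|A|$ and $|A^*|$, with the squaring function handled by Lemma~\ref{lemma 2.1}(2) on bounded truncations plus monotone convergence -- this yields $\mu_s(AA^*)=\mu_s(x+y)$ as functions of $s$.

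Next, I introduce the self-adjoint unitary $V := \begin{pmatrix} 1 & 0 \\ 0 & -1\end{pmatrix} \in \mathbb{M}_2(\mathcal{M})$. Conjugation by $V$ flips the sign of the off-diagonal entries of $AA^*$, giving the matrix identity
\[
x \oplus y \;=\; \tfrac{1}{2}\bigl(AA^* + V(AA^*)V^*\bigr).
\]
Applying Lemma~\ref{lemma 2.1}(5) with the convex increasing function $f(t)=t$ to $w_1 := \tfrac{1}{2}AA^*$ and $w_2 := \tfrac{1}{2}V(AA^*)V^*$, and using Lemma~\ref{lemma 2.1}(1) and (4) to see that $\mu_s(w_1)=\mu_s(w_2)=\tfrac{1}{2}\mu_s(AA^*)$, one obtains for every $a>0$
\[
\int_0^a \mu_s(x\oplus y)\,ds \;=\; \int_0^a \mu_s(w_1+w_2)\,ds \;\leq\; \int_0^a \mu_s(AA^*)\,ds \;=\; \int_0^a \mu_s(x+y)\,ds,
\]
which is precisely $\mu_\cdot(x\oplus y)\prec\prec\mu_\cdot(x+y)$.

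The only slightly delicate point, and hence the main obstacle, is the identification $\mu_s(A^*A)=\mu_s(AA^*)$ in the $\tau$-measurable setting, since $x^{1/2}$ and $y^{1/2}$ need not be bounded so Lemma~\ref{lemma 2.1}(2) cannot be applied to $t\mapsto t^2$ directly. Once this standard but technical point is in hand, the rest is a mechanical combination of the above pinching identity with Lemma~\ref{lemma 2.1}.
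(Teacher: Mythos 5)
Your proposal is correct and follows essentially the same route as the paper: the same $2\times 2$ operator matrix factorization realizing $(x+y)\oplus 0$ and $\begin{pmatrix} x & x^{1/2}y^{1/2} \\ y^{1/2}x^{1/2} & y\end{pmatrix}$ as $A^*A$ and $AA^*$, followed by the same $\mathrm{diag}(1,-1)$ pinching and Lemma~\ref{lemma 2.1}(5). You are in fact slightly more careful than the paper, which uses the identity $\mu_\cdot(A^*A)=\mu_\cdot(AA^*)$ silently where you flag and justify it.
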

\begin{proof}
One can easily check that  $\mu_\cdot (x+y)=\mu_\cdot((x+y)\oplus0)$. It is sufficient to show that $\mu_\cdot(x\oplus y)\prec\prec\mu_\cdot((x+y)\oplus0)$.
Let $T=\begin{bmatrix} x^{\frac{1}{2}} & y^{\frac{1}{2}} \\ 0 & 0 \\ \end{bmatrix}$ and $T^*=\begin{bmatrix} x^{\frac{1}{2}} & 0 \\ y^{\frac{1}{2}} & 0 \\ \end{bmatrix}$.
It follows that
\begin{align}
\mu_\cdot((x+y)\oplus0)&=\mu_\cdot(\begin{bmatrix} x^{\frac{1}{2}} & y^{\frac{1}{2}} \\ 0 & 0 \\ \end{bmatrix}\begin{bmatrix} x^{\frac{1}{2}} & 0 \\ y^{\frac{1}{2}} & 0 \\ \end{bmatrix})\nonumber\\
&=\mu_\cdot(\begin{bmatrix} x^{\frac{1}{2}} & 0 \\ y^{\frac{1}{2}} & 0 \\ \end{bmatrix}\begin{bmatrix} x^{\frac{1}{2}} & y^{\frac{1}{2}} \\ 0 & 0 \\ \end{bmatrix})\nonumber\\
&=\mu_\cdot(\begin{bmatrix} x & x^{\frac{1}{2}}y^{\frac{1}{2}} \\ y^{\frac{1}{2}} x^{\frac{1}{2}} & y \\ \end{bmatrix}).\nonumber
\end{align}
By Lemma\ \ref{lemma 2.1}(5), we have
\begin{align}
\int_{0}^{a}\mu_{s}(x\oplus y)ds&=\int_{0}^{a}\frac{1}{2}\mu_{s}(\begin{bmatrix} x & x^{\frac{1}{2}}y^{\frac{1}{2}} \\ y^{\frac{1}{2}} x^{\frac{1}{2}} & y \\ \end{bmatrix}+\begin{bmatrix} 1 & 0 \\ 0 & -1 \\ \end{bmatrix}\begin{bmatrix} x & x^{\frac{1}{2}}y^{\frac{1}{2}} \\ y^{\frac{1}{2}} x^{\frac{1}{2}} & y \\ \end{bmatrix}\begin{bmatrix} 1 & 0 \\ 0 & -1 \\ \end{bmatrix})ds\nonumber\\
&\leq\int_{0}^{a}\mu_{s}(\begin{bmatrix} x & x^{\frac{1}{2}}y^{\frac{1}{2}} \\ y^{\frac{1}{2}} x^{\frac{1}{2}} & y \\ \end{bmatrix})ds,\nonumber
\end{align}
which implies that 
$$\mu_\cdot(x\oplus y)\prec\prec \mu_\cdot(x+y). $$
\end{proof}

Next, we will introduce an important result which follows immediately from \cite[Page 4, (9)]{2021On} and \cite[Proposition 3.2]{2019Logarithmic}.

\begin{lemma}\label{lemma3.8}
Let $x, y\in  L_0(\mathcal{M})$. Then
\begin{enumerate}
\item  let $xy\in L_{0}(\mathcal{M}), p>0$. If $xy$ is self-adjoint, then
 $$\int_0^t\mu_s(xy)^pds \leq \int_0^t\mu_s(yx)^pds;$$
\item (\cite[Proposition 2.4]{2016On}) let $x,y \geq 0$. If $r\geq 1$, then
$$|xy|^{r}\prec\prec x^{r}y^{r}.$$
\end{enumerate}
\end{lemma}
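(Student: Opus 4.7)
Since Lemma~\ref{lemma3.8} collects two facts quoted from the literature, my plan is to verify how each part follows from its cited source rather than to rederive them in full.

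For part~(1), I would begin with the cyclicity identity $\tau((xy)^n)=\tau((yx)^n)$, valid for every integer $n\ge 1$ whenever both sides converge. Under the hypothesis that $xy$ is self-adjoint, the continuous functional calculus converts even integer moments into absolute-value moments $\int_0^{\tau(1)}\mu_s(xy)^{2k}\,ds$, which can then be compared with the corresponding moments of $yx$ (note that $yx$ need not be self-adjoint, so one works with $|yx|$ via polar decomposition). The next step would be a Ky~Fan-type lift from full moments to the truncated integrals $\int_0^t\mu_s(xy)^n\,ds\le \int_0^t\mu_s(yx)^n\,ds$ on arbitrary initial segments $(0,t)$, and finally an extension from integer exponents $n$ to arbitrary real $p>0$ by invoking the logarithmic submajorization in \cite[Proposition~3.2]{2019Logarithmic} together with the convex-function principle in Lemma~\ref{lemma 2.1}(5)/(6) applied to the map $u\mapsto u^p$. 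All of this packaging is exactly what \cite[Page~4, (9)]{2021On} records, so the proof ultimately reduces to citation of this inequality.

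For part~(2), I would simply quote \cite[Proposition~2.4]{2016On}, which is the Araki--Lieb--Thirring submajorization $|xy|^r\prec\prec x^ry^r$ in the $\tau$-measurable setting for $x,y\ge 0$ and $r\ge 1$; no additional argument is required beyond recording the statement.

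The main obstacle, occurring in part~(1), is the passage from the global identity given by trace cyclicity to the truncated integral inequality on $(0,t)$. This is precisely where the self-adjointness of $xy$ is genuinely used, since it allows the spectral-calculus identification $|xy|^n=(xy)^n$ for even $n$, without which the Schatten-type moments of $xy$ would not admit a clean reformulation in terms of the singular numbers $\mu_s(xy)$. Once this identification is made, the rest is a packaging of standard rearrangement and convex-function arguments already present in the references cited.
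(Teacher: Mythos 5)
For part (2) you do exactly what the paper does: the statement is quoted verbatim from \cite[Proposition 2.4]{2016On} (the Araki--Lieb--Thirring submajorization in the $\tau$-measurable setting), and nothing further is given or needed. For part (1) the paper likewise offers no proof beyond the citation of \cite[Page 4, (9)]{2021On} and \cite[Proposition 3.2]{2019Logarithmic}, so at the level of what is actually written your conclusion agrees with the paper's. However, the derivation you sketch for part (1) contains a genuine gap, and if you tried to carry it out it would fail at the step you call the ``Ky Fan-type lift.''

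The problem is that an ordering of all global moments does not imply the truncated inequalities. Trace cyclicity together with $|\tau(A^{n})|\le\tau(|A|^{n})$ gives you at best $\int_0^{\tau(1)}\mu_s(xy)^{n}\,ds\le\int_0^{\tau(1)}\mu_s(yx)^{n}\,ds$ for even integers $n$, i.e.\ an ordering of Schatten norms. There is no general principle upgrading a family of Schatten-norm inequalities to submajorization of the $p$-th powers on every initial segment $(0,t)$: for instance the sequences $a=(1,1,0,0,0,0)$ and $b=(1.1,0.8,0.8,0.8,0.8,0.8)$ satisfy $\sum_j a_j^{n}\le\sum_j b_j^{n}$ for every integer $n\ge1$, yet $a_1+a_2=2>1.9=b_1+b_2$, so $a\not\prec\prec b$. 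The mechanism that actually yields part (1) is local and multiplicative rather than global: since $xy$ is self-adjoint its singular values are the moduli of its eigenvalues, the nonzero spectra of $xy$ and $yx$ coincide, and Weyl-type inequalities give the logarithmic submajorization $\mu(xy)\prec\prec_{\log}\mu(yx)$ on \emph{every} initial segment (this is what \cite[Page 4, (9)]{2021On} records); one then applies the increasing function $u\mapsto u^{p}$, for which $u\mapsto e^{pu}$ is convex, via \cite[Proposition 3.2]{2019Logarithmic} (equivalently Lemma~\ref{lemma 2.1}(6)) to obtain the stated power inequality. In other words, logarithmic submajorization is the input at the very first step, not a device for interpolating from integer to real exponents at the end, and the self-adjointness of $xy$ is used to identify $\mu(xy)$ with the eigenvalue moduli rather than to manufacture even moments.
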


Now, we can get an inequality on $B_t$ and $F_t$, described in the following theorem.

\begin{theorem}\label{theorem3.1}
Let $0\leq x,y\in E(\mathcal{M})$ and $z\in\mathcal{M}$ for $t\in[\frac{1}{2},1]$. Then
$$ ||B_t||_{E(\mathcal{M})}\leq||x+y||_{E(\mathcal{M})}^{\frac{1}{2}}||F_t||_{E(\mathcal{M})}^{\frac{1}{2}}.$$
\end{theorem}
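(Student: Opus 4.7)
The plan is to derive the inequality directly from the factorization given by Lemma~\ref{lemma3.1} combined with the noncommutative Hölder inequality in Lemma~\ref{lemma 3.2}(1). Since the result of Lemma~\ref{lemma3.1} furnishes a contraction $c \in \mathcal{M}$ with
$$B_t = (x+y)^{1/2}\, c\, (F_t)^{1/2},$$
this looks tailor-made for an $L_2$--$L_2$ Hölder bound.

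Concretely, I would set $u = (x+y)^{1/2}$ and $v = c(F_t)^{1/2}$, so that $B_t = uv$. Applying Lemma~\ref{lemma 3.2}(1) with $m = n = 2$ yields
$$\|B_t\|_{E(\mathcal{M})} \leq \bigl\||u|^{2}\bigr\|_{E(\mathcal{M})}^{1/2}\, \bigl\||v|^{2}\bigr\|_{E(\mathcal{M})}^{1/2}.$$
Because $u$ is positive, $|u|^2 = u^{*}u = x+y$ directly. For the second factor, $|v|^2 = v^{*}v = (F_t)^{1/2} c^{*}c\, (F_t)^{1/2}$, and the contractivity of $c$ gives $c^{*}c \leq 1$, whence $|v|^2 \leq F_t$ as self-adjoint operators in $L_0(\mathcal{M})$.

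The final step is to pass from the operator inequality $|v|^2 \leq F_t$ to the norm inequality. By Lemma~\ref{lemma 2.1}(3), $\mu_\cdot(|v|^2) \leq \mu_\cdot(F_t)$, and since the norm on $E(\mathcal{M})$ is defined via $\|\cdot\|_E$ applied to $\mu_\cdot$ on a fully symmetric space, we conclude $\||v|^2\|_{E(\mathcal{M})} \leq \|F_t\|_{E(\mathcal{M})}$. Combining everything gives the desired bound. I do not anticipate any real obstacle here beyond checking that the intermediate products sit in the correct spaces so that Lemma~\ref{lemma 3.2}(1) is applicable; this is automatic from $x,y \in E(\mathcal{M})$, $z \in \mathcal{M}$, and the definition of $B_t$ and $F_t$.
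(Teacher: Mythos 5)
Your proof is correct and follows essentially the same route as the paper: both arguments start from the factorization $B_t=(x+y)^{1/2}c\,F_t^{1/2}$ supplied by Lemma~\ref{lemma3.1} and then apply a H\"older-type bound with exponents $m=n=2$. The only difference is in how the contraction $c$ is disposed of --- the paper applies Lemma~\ref{lemma 3.2}(2) with $c$ as the middle factor and then strips it off via $\|(x+y)c\|_{E(\mathcal{M})}\le\|x+y\|_{E(\mathcal{M})}$, whereas you absorb $c$ into the second factor and remove it through the operator inequality $F_t^{1/2}c^*c\,F_t^{1/2}\le F_t$ combined with Lemma~\ref{lemma 2.1}(3); both steps are valid.
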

\begin{proof}
From Lemma\ \ref{lemma3.1} and Lemma\ \ref{lemma 3.2}(2), we can get
\begin{align*}
||B(t)||_{E(\mathcal{M})}&=||(x+y)^{\frac{1}{2}}cF_t^{\frac{1}{2}}||_{E(\mathcal{M})}\nonumber\\
&\leq||(x+y)c||^{\frac{1}{2}}_{E(\mathcal{M})}||cF_{t}||^{\frac{1}{2}}_{E(\mathcal{M})}\nonumber\\
&\leq||(x+y)||^{\frac{1}{2}}_{E(\mathcal{M})}||F_{t}||^{\frac{1}{2}}_{E(\mathcal{M})}.\nonumber
\end{align*}
It follows from H\"{o}lder inequality that  $||B_t||_{E(\mathcal{M})}\leq||x+y||_{E(\mathcal{M})}^{\frac{1}{2}}||F_t||_{E(\mathcal{M})}^{\frac{1}{2}}.$
\end{proof}

 In particular, take $z=1$ in Theorem  \ref{theorem3.1} and then we can easily obtain the following result.
\begin{corollary}\label{corollary3.1}
Let $0\leq x,y\in E(\mathcal{M})$. If  $t\in[\frac{1}{2},1]$, then
$$ ||b_t||_{E(\mathcal{M})}\leq||x+y||^{\frac{1}{2}}_{E(\mathcal{M})}||f_t||^{\frac{1}{2}}_{E(\mathcal{M})}.$$
\end{corollary}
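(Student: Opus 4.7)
The plan is a direct specialization of Theorem \ref{theorem3.1}. The idea is to take $z = 1 \in \mathcal{M}$, which is allowed since $\mathcal{M}$ is a (semifinite) von Neumann algebra and therefore contains the identity as a bounded element. Substituting this choice into the definitions of $B_t$ and $F_t$ collapses the $z$ and $z^{*}$ factors and yields precisely $B_t = x^{t}y^{1-t} + y^{t}x^{1-t} = b_t$ and $F_t = y^{1-t}x^{2t-1}y^{1-t} + x^{1-t}y^{2t-1}x^{1-t} = f_t$.

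With those identifications, the conclusion of Theorem \ref{theorem3.1},
\[
\|B_t\|_{E(\mathcal{M})} \leq \|x+y\|_{E(\mathcal{M})}^{\frac{1}{2}}\,\|F_t\|_{E(\mathcal{M})}^{\frac{1}{2}},
\]
reads exactly as the desired inequality for $b_t$ and $f_t$. There is essentially no obstacle here, since all of the real work (the $2\times 2$ block-matrix factorisation from Lemma \ref{lemma3.1}, extraction of the contraction $c$ so that $B_t=(x+y)^{1/2}cF_t^{1/2}$, and the Hölder-type bound from Lemma \ref{lemma 3.2}(1)) has already been handled inside the proof of Theorem \ref{theorem3.1}. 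The only thing worth checking is the hypothesis of Theorem \ref{theorem3.1}, namely that $z \in \mathcal{M}$; for $z=1$ this is immediate, and the positivity assumptions $0\leq x,y\in E(\mathcal{M})$ and the range $t\in[\tfrac12,1]$ carry over verbatim.

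So my proof would consist of a single sentence: set $z=1$ in Theorem \ref{theorem3.1} and observe that $B_t$ and $F_t$ reduce to $b_t$ and $f_t$.
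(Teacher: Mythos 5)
Your proposal is correct and is exactly what the paper does: the authors state the corollary as the specialization $z=1$ of Theorem \ref{theorem3.1}, under which $B_t$ becomes $b_t$ and $F_t$ becomes $f_t$. No further comment is needed.
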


Now let's introduce a main result in this note as follows.
\begin{proposition}\label{theorem3.4}
Let $0\leq x,y\in E(\mathcal{M})$. Then

$$||f_{\frac{3}{4}}||_{E(\mathcal{M})}\leq||x+y||_{E(\mathcal{M})}.$$

\end{proposition}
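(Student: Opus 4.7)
The plan is to exploit the special algebraic structure that arises at $t=\tfrac{3}{4}$. At this value, $f_{3/4}=y^{1/4}x^{1/2}y^{1/4}+x^{1/4}y^{1/2}x^{1/4}$, and the two summands admit a natural factorization: setting $w:=x^{1/4}y^{1/4}$, one has $y^{1/4}x^{1/2}y^{1/4}=w^{*}w=|w|^{2}$ and $x^{1/4}y^{1/2}x^{1/4}=ww^{*}=|w^{*}|^{2}$. By Lemma \ref{lemma 2.1}(1),(2), both terms are positive with $\mu_{\cdot}(y^{1/4}x^{1/2}y^{1/4})=\mu_{\cdot}(x^{1/4}y^{1/2}x^{1/4})=\mu_{\cdot}(w)^{2}=\mu_{\cdot}(|w|^{2})$. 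So the first task is just to record this identity.

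Next, I would estimate $\mu_{\cdot}(f_{3/4})$ by submajorization in two stages. By Lemma \ref{lemma 2.1}(5) applied to the positive operators $|w|^{2}$ and $|w^{*}|^{2}$ with $f(s)=s$,
$$\mu_{\cdot}(f_{3/4})\prec\prec \mu_{\cdot}(|w|^{2})+\mu_{\cdot}(|w^{*}|^{2})=2\,\mu_{\cdot}(|w|^{2}).$$
Then Lemma \ref{lemma3.8}(2) with $r=2$ (applied to $x^{1/4}\geq 0$ and $y^{1/4}\geq 0$) gives
$$|w|^{2}=|x^{1/4}y^{1/4}|^{2}\prec\prec x^{1/2}y^{1/2},$$
so chaining the two submajorizations yields $\mu_{\cdot}(f_{3/4})\prec\prec 2\mu_{\cdot}(x^{1/2}y^{1/2})$.

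For the final step, I use that $E$ is fully symmetric: the above submajorization transfers to norms, so
$$\|f_{3/4}\|_{E(\mathcal{M})}\leq 2\|x^{1/2}y^{1/2}\|_{E(\mathcal{M})}.$$
An application of the arithmetic–geometric mean inequality in Lemma \ref{lemma 3.2}(2) with $z=1$ and $t=0$ gives precisely $2\|x^{1/2}y^{1/2}\|_{E(\mathcal{M})}\leq\|x+y\|_{E(\mathcal{M})}$, and combining the two inequalities finishes the proof. The only point that requires a bit of care is the step $|w|^{2}\prec\prec x^{1/2}y^{1/2}$: this is really the heart of the argument and is where one uses that $t=\tfrac{3}{4}$ makes the exponents match up so that Lemma \ref{lemma3.8}(2) applies with $r=2\geq 1$. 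The rest is essentially bookkeeping with submajorization, domain membership (one checks $x^{1/2}y^{1/2}\in E(\mathcal{M})$ a posteriori from the AM-GM bound), and the fully symmetric norm property.
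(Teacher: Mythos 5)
Your proof is correct and follows essentially the same route as the paper's: split $f_{3/4}$ into the two positive summands $|w|^{2}$ and $|w^{*}|^{2}$ with $w=x^{1/4}y^{1/4}$, apply submajorization subadditivity (Lemma \ref{lemma 2.1}(5)), bound $|x^{1/4}y^{1/4}|^{2}\prec\prec x^{1/2}y^{1/2}$ via Lemma \ref{lemma3.8}(2), and finish with the arithmetic--geometric mean inequality of Lemma \ref{lemma 3.2}(2). You are in fact slightly more explicit than the paper about why the two summands have equal singular value functions and about transferring submajorization to the fully symmetric norm.
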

\begin{proof}
For $t=\frac{3}{4}$, it follows from Lemma\;\ref{lemma 2.1} and Lemma\;\ref{lemma3.8}
 that
\begin{align}
\int_0^a\mu_s(f_{\frac{3}{4}})ds&\leq\int_0^a\mu_s(y^{\frac{1}{4}}x^{\frac{1}{2}}y^{\frac{1}{4}})
+\mu_s(x^{\frac{1}{4}}y^{\frac{1}{2}}x^{\frac{1}{4}})ds\nonumber \\
&\leq\int_0^a\mu_s(x^{\frac{1}{2}}y^{\frac{1}{2}})
+\mu_s(x^{\frac{1}{2}}y^{\frac{1}{2}})ds\nonumber\nonumber\\
&=\int_0^a2\mu_s(x^{\frac{1}{2}}y^{\frac{1}{2}})ds \nonumber.
\end{align}
Combining this with Lemma  \ref{lemma 3.2}, we have
$||f_{\frac{3}{4}}||_{E(\mathcal{M})}\leq2\|x^{\frac{1}{2}}y^{\frac{1}{2}}\|_{E(\mathcal{M})}\leq||x+y||_{E(\mathcal{M})}.$
\end{proof}

Based on the facts above, wee will get a partial improvement on Inequality (\ref{conclusion(7)}) stated as follows.
\begin{theorem}\label{theorem3.2}
Let $0\leq x,y\in E(\mathcal{M})$. Then for $t\in[\frac{3}{4},1]$,
$$||f_{t}||_{E(\mathcal{M})} \leq ||x+y||_{E(\mathcal{M})}.$$
\end{theorem}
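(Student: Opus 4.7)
The inequality is an equality at $t=1$ (since $f_1=x+y$) and coincides with Proposition~\ref{theorem3.4} at $t=3/4$, so the task is to fill in the open interval $(3/4,1)$. My plan is to interpolate between the two endpoints via the Stein--Hirschman three-lines lemma, so that the common bound $\|x+y\|_{E(\mathcal{M})}$ at the two endpoints propagates to the whole interval.

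By a standard truncation/approximation argument in $E(\mathcal{M})$ I first reduce to the case of bounded, strictly positive $x,y$, so that the complex powers $x^z,y^z$ are defined via the Borel functional calculus and $z\mapsto f_z:=y^{1-z}x^{2z-1}y^{1-z}+x^{1-z}y^{2z-1}x^{1-z}$ is an analytic $E(\mathcal{M})$-valued function on the strip $S=\{z:3/4\le\operatorname{Re}(z)\le 1\}$, continuous up to $\partial S$. On the right boundary $\operatorname{Re}(z)=1$ one factors $x^{1+2iw}=x\cdot x^{2iw}$ and $y^{\pm iw}$ as unitaries, and by the cyclic identity $\mu_\cdot(AB)=\mu_\cdot(BA)$ (a consequence of Lemma~\ref{lemma 2.1}(1)) together with the unitary invariance of $\|\cdot\|_{E(\mathcal{M})}$, each summand of $f_{1+iw}$ has the same generalized singular numbers as $x$ (resp.\ $y$) sandwiched by unitaries, yielding $\|f_{1+iw}\|_{E(\mathcal{M})}\le\|x+y\|_{E(\mathcal{M})}$ uniformly in $w$. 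On the left boundary $\operatorname{Re}(z)=3/4$ the same cyclic moves reduce $f_{3/4+iw}$ to the setting of Proposition~\ref{theorem3.4} up to absorbed unitary factors: Lemma~\ref{lemma 2.1}(5) splits the sum, Lemma~\ref{lemma3.8}(2) submajorizes each summand by $\mu_\cdot(x^{1/2}y^{1/2})$, and Lemma~\ref{lemma 3.2}(2) with $t=1/2$, $z=1$ closes the chain via $2\mu_\cdot(x^{1/2}y^{1/2})\prec\prec\mu_\cdot(x+y)$. The three-lines lemma then gives, with $\theta=4t-3\in[0,1]$,
\[
\|f_t\|_{E(\mathcal{M})}\le\|f_{3/4}\|_{E(\mathcal{M})}^{1-\theta}\,\|f_1\|_{E(\mathcal{M})}^{\theta}\le\|x+y\|_{E(\mathcal{M})}.
\]

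The chief obstacle will be the uniform-in-$w$ boundary bounds, particularly at $\operatorname{Re}(z)=3/4$: the unitary factors $x^{2iw},y^{-iw}$ there sit sandwiched between the positive powers $y^{1/4},x^{1/2},y^{1/4}$, and at $\operatorname{Re}(z)=1$ a naive triangle inequality only yields $\|x\|_{E(\mathcal{M})}+\|y\|_{E(\mathcal{M})}$ rather than the tight $\|x+y\|_{E(\mathcal{M})}$. The key observation at $\operatorname{Re}(z)=3/4$ is the cyclic reduction $\mu_\cdot(y^{1/4-iw}x^{1/2+2iw}y^{1/4-iw})=\mu_\cdot(x^{1/2+2iw}y^{1/2-2iw})$, after which the imaginary exponents detach as unitaries on each side of $x^{1/2}y^{1/2}$ and leave the Bhatia--Kittaneh submajorization of Lemma~\ref{lemma 3.2}(2) intact; a parallel but more delicate argument at $\operatorname{Re}(z)=1$ must exploit the cancellations between the two summands of $f_{1+iw}$ to avoid leaking from $\|x+y\|_{E(\mathcal{M})}$ to $\|x\|_{E(\mathcal{M})}+\|y\|_{E(\mathcal{M})}$. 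Should this complex-interpolation route prove obstinate, a more direct alternative is to mirror Proposition~\ref{theorem3.4} verbatim, using the factorization $f_t=M^*M$ for $M=(x^{(2t-1)/2}y^{1-t};\,y^{(2t-1)/2}x^{1-t})^T$ together with the block-matrix submajorization of Lemma~\ref{lemma3.6} to stay inside $\mu_\cdot(x+y)$ rather than leaking to $\mu_\cdot(x)+\mu_\cdot(y)$.
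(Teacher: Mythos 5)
Your interpolation strategy is genuinely different from the paper's, but as written it has gaps I do not see how to close. The most serious one is the boundary estimate at $\operatorname{Re}(z)=1$, which is exactly the point you flag as ``more delicate'' and then leave open: the two summands of $f_{1+iw}=y^{-iw}x^{1+2iw}y^{-iw}+x^{-iw}y^{1+2iw}x^{-iw}$ have singular value functions $\mu(x)$ and $\mu(y)$ (unitary conjugation plus $\mu(x^{1+2iw})=\mu(x)$), so the triangle inequality gives only $\|x\|_{E(\mathcal{M})}+\|y\|_{E(\mathcal{M})}$, which in general strictly exceeds $\|x+y\|_{E(\mathcal{M})}$ (orthogonal projections and the operator norm give $2$ versus $1$). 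You supply no mechanism for the cancellation needed to get a bound by $\|x+y\|_{E(\mathcal{M})}$ uniformly in $w$, and without it the three-lines lemma yields nothing better than the already-known $2^{|\cdot|}$-type constants. A second, independent error is the claimed ``cyclic identity $\mu_\cdot(AB)=\mu_\cdot(BA)$'': this is false --- singular values, unlike eigenvalue moduli, are not invariant under cyclic permutation, and Lemma~\ref{lemma 2.1}(1) only gives $\mu(AB)=\mu(B^*A^*)$. The correct substitute is the submajorization of Lemma~\ref{lemma3.8}(1), which requires the product to be self-adjoint; $y^{1/4-iw}x^{1/2+2iw}y^{1/4-iw}$ is not self-adjoint for $w\neq 0$, so your left boundary estimate does not reduce to Proposition~\ref{theorem3.4} either. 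Finally, applying the three-lines lemma to an $E(\mathcal{M})$-valued function requires a reduction to scalar analytic functions via K\"othe duality, which for a general fully symmetric space needs extra hypotheses (e.g.\ the Fatou property) that the theorem does not assume.

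The paper's actual proof is a non-interpolation argument: for $r=2t-1$ it writes $f_t\oplus 0=XDX^*$ with $X$ built from $y^{(1-r)/2}x^{(1-r)/2}$ and $x^{(1-r)/2}y^{(1-r)/2}$ and $D=x^{2r-1}\oplus y^{2r-1}$, uses $\int_0^a\mu_s(XDX^*)\,ds\le\int_0^a\mu_s(X^*XD)\,ds$ (Lemma~\ref{lemma3.8}(1), legitimate there because $XDX^*$ is self-adjoint), and then a H\"older inequality with exponents $m=\frac{1}{2(1-r)}$, $n=\frac{1}{2r-1}$, handling the $D$-factor by Lemma~\ref{lemma3.6} and the $X^*X$-factor by the Araki--Lieb--Thirring inequality and Lemma~\ref{lemma 3.2}(2). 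Your fallback factorization $f_t=M^*M$ is not this argument: it does not isolate the excess powers $x^{2r-1},y^{2r-1}$ into a diagonal block, so there is no visible route from $MM^*$ to $\|x+y\|_{E(\mathcal{M})}$. Be aware also that even the paper's computation lands on $2^{1-1/m}\|x+y\|_{E(\mathcal{M})}$ with $m$ pinned to $t$ (i.e.\ the constant $2^{4t-3}$), and its concluding appeal to ``the arbitrariness of $m$'' deserves scrutiny; the open interval $(3/4,1)$ is the genuinely hard part for any approach, and your proposal does not yet get through it.
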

\begin{proof}
The case $t=1$ is obvious and the result for $t=\frac{3}{4}$ has been proved in Proposition\;\ref{theorem3.4}. Thus we only need to show the case $\frac{3}{4}<t<1$. For any $t\in(\frac{3}{4},1)$, set $r=2t-1$. Denote
$$X=
\begin{bmatrix}
y^{\frac{1-r}{2}}x^{\frac{1-r}{2}} & x^{\frac{1-r}{2}}y^{\frac{1-r}{2}}\\
0 & 0\\
\end{bmatrix}$$
and
$$D=
\begin{bmatrix}
x^{2r-1} & 0\\
0 & y^{2r-1}\\
\end{bmatrix}.$$
Let $m=\frac{1}{2(1-r)}$ and $n=\frac{1}{2r-1}$.  Since $t\in(\frac{3}{4},1)$, it follows that $m, n>1$ and $m\downarrow1$ as $t\downarrow \frac34$.
Noting that $\frac{1}{m}+\frac{1}{n}=2(1-r)+2r-1=1$, we have from Lemma \ref{lemma3.8}(1) that

\begin{align*}\int_{0}^{a}\mu_{s}(f_t)ds
&=\int_{0}^{a}\mu_{s}(f_t\oplus0)ds \nonumber\\
&=\int_{0}^{a}\mu_{s}
(\begin{bmatrix}
y^{\frac{1-r}{2}}x^{\frac{1-r}{2}} & x^{\frac{1-r}{2}}y^{\frac{1-r}{2}}\\
0 & 0\\
\end{bmatrix}
\begin{bmatrix}
x^{2r-1} & 0\\
0 & y^{2r-1}\\
\end{bmatrix}
\begin{bmatrix}
 x^{\frac{1-r}{2}}y^{\frac{1-r}{2}} & 0\\
 y^{\frac{1-r}{2}}x^{\frac{1-r}{2}} & 0\\
\end{bmatrix})ds \nonumber\\
&= \int_{0}^{a}\mu_{s}(XDX^{*})ds \nonumber \\
&\leq\int_{0}^{a}\mu_{s}(X^*XD)ds.\nonumber
\end{align*}
It follows that
$$||f_t||_{E(\mathcal{M})}\leq||X^*XD||_{E(\mathcal{M})}.$$
By Lemma\ \ref{lemma 3.2}(1), we obtain
\begin{align*}
||X^*XD||_{E(\mathcal{M})}&\leq||D||_{E(\mathcal{M})^{(n)}}||X^{*}X||_{E(\mathcal{M})^{(m)}}\nonumber\\
&=|||D|^{n}||^{\frac{1}{n}}_{E(\mathcal{M})}|||X^{*}X|^{m}||^{\frac{1}{m}}_{E(\mathcal{M})}.\nonumber
\end{align*}
Combining the inequality above with \;Lemma\ \ref{lemma3.6}, we have that
\begin{align*}
|||D|^{n}||^{\frac{1}{n}}_{E(\mathcal{M})}&=||\mu_{s}(D)^{n}||_{E}^{\frac{1}{n}}\nonumber\\
&=||\mu_{s}(x^{2r-1}\oplus y^{2r-1})^{n}||_{E}^{\frac{1}{n}}\nonumber\\
&=||\mu_{s}(x\oplus y)||_{E}^{\frac{1}{n}}\nonumber\\
&\leq||\mu_{s}(x+ y)||_{E}^{\frac{1}{n}}\nonumber\\
&=||x+y||_{E(\mathcal{M})}^{\frac{1}{n}}.\nonumber
\end{align*}
In view of Lemma\;\ref{lemma 2.1} and Lemma\;\ref{lemma3.8}(2),
we obtain
\begin{align*}
\int_{0}^{a}\mu_{s}(X^*X)^{m}ds &\leq \int_{0}^{a}\mu_{s}(y^{\frac{1-r}{2}}x^{1-r}y^{\frac{1-r}{2}}+x^{\frac{1-r}{2}}y^{1-r}x^{\frac{1-r}{2}})^{m}ds\nonumber \\
&\leq \int_{0}^{a}[2\mu_{s}(x^{\frac{1-r}{2}}y^{\frac{1-r}{2}})^{2}]^{m}ds)\nonumber \\
&=\int_{0}^{a}2^{m}\mu_{s}(x^{\frac{1-r}{2}}y^{\frac{1-r}{2}})^{2m}ds)\nonumber \\
&\leq\int_{0}^{a}2^{m}\mu_{s}(x^{\frac{1}{2}}y^{\frac{1}{2}})ds)\nonumber \\
&=\int_{0}^{a}\mu_{s}(2^{m}x^{\frac{1}{2}}y^{\frac{1}{2}})ds),\nonumber
\end{align*}
which shows that
$$|X^{*}X|^{m}\prec\prec2^{m}x^{\frac{1}{2}}y^{\frac{1}{2}}.$$
Then we can conclude from Lemma\ \ref{lemma 3.2}(2) that
\begin{align*}
|||X^{*}X|^{m}||^{\frac{1}{m}}_{E(\mathcal{M})}&\leq||2^{m}x^{\frac{1}{2}}y^{\frac{1}{2}}||^{\frac{1}{m}}_{E(\mathcal{M})}\nonumber\\
&\leq2||x^{\frac{1}{2}}y^{\frac{1}{2}}||^{\frac{1}{m}}_{E(\mathcal{M})}\nonumber\\
&\leq 2^{1-\frac1m}||x+y||^{\frac{1}{m}}_{E(\mathcal{M})}.\nonumber
\end{align*}
Therefore,
\begin{align*}
||f_t||_{E(\mathcal{M})}&\leq||X^*XD||_{E(\mathcal{M})}\nonumber\\
&\leq||D||_{E(\mathcal{M})^{(n)}}||X^{*}X||_{E(\mathcal{M})^{(m)}}\nonumber\\
&=|||D|^{n}||^{\frac{1}{n}}_{E(\mathcal{M})}|||X^{*}X|^{m}||^{\frac{1}{m}}_{E(\mathcal{M})}\nonumber\\
&\leq2^{1-\frac1m}||x+y||^{\frac{1}{m}}_{E(\mathcal{M})}||x+y||^{\frac{1}{n}}_{E(\mathcal{M})}\nonumber\\
&=2^{1-\frac1m}||x+y||_{E(\mathcal{M})}, \mbox{for~all}~m\in (1, \infty). \nonumber
\end{align*}
By the arbitrariness of $m$, it is known that $||f_t||_{E(\mathcal{M})}\leq||x+y||_{E(\mathcal{M})}$.
\end{proof}
Applying the method of proving Theorem\;\ref{theorem3.2}, we can directly obtain the following result without proof.

\begin{proposition}
Let $0\leq x,y\in \mathbb{M}_{n}(\mathbb{C})$. Then for $t\in[\frac{3}{4},1]$,

$$\||f_{t}\|| \leq\||x+y\||,$$
where $\||\cdot\||$ is the unitarily invariant norm.

\end{proposition}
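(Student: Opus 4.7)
The plan is to transcribe the proof of Theorem \ref{theorem3.2} into the finite-dimensional matrix setting. This is sound because every unitarily invariant norm on $\mathbb{M}_n(\mathbb{C})$ arises, via Ky Fan's theorem, from a symmetric gauge function applied to singular values; equivalently, $\mathbb{M}_n(\mathbb{C})$ can be viewed as a noncommutative fully symmetric space over the finite factor $(\mathbb{M}_n(\mathbb{C}),\mathrm{tr})$. Hence the submajorization machinery (Lemmas \ref{lemma 2.1}, \ref{lemma3.6}, \ref{lemma3.8}) and the H\"older-type inequality of Lemma \ref{lemma 3.2}(1) all have direct classical analogs for unitarily invariant norms.

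First I would handle the endpoints. The case $t=1$ is trivial since $f_1=x+y$. For $t=\tfrac{3}{4}$, I would reproduce the short proof of Proposition \ref{theorem3.4} verbatim: the classical versions of Lemma \ref{lemma 2.1} and Lemma \ref{lemma3.8}(1) give $\mu(y^{1/4}x^{1/2}y^{1/4})\prec\!\prec\mu(x^{1/2}y^{1/2})$, and the matrix analog of Lemma \ref{lemma 3.2}(2) yields $2\|x^{1/2}y^{1/2}\|\le\|x+y\|$, so $\|f_{3/4}\|\le\|x+y\|$.

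For $t\in(\tfrac{3}{4},1)$, set $r=2t-1$ and form the block matrices
$$
X=\begin{bmatrix} y^{(1-r)/2}x^{(1-r)/2} & x^{(1-r)/2}y^{(1-r)/2} \\ 0 & 0 \end{bmatrix},\quad
D=\begin{bmatrix} x^{2r-1} & 0 \\ 0 & y^{2r-1} \end{bmatrix}.
$$
A direct multiplication shows $XDX^{*}=f_t\oplus 0$, so $\|f_t\|=\|XDX^{*}\|$. Since $XDX^{*}$ is self-adjoint, the classical submajorization $\mu(XDX^{*})\prec\!\prec\mu(X^{*}XD)$ (Lemma \ref{lemma3.8}(1)) gives $\|f_t\|\le\|X^{*}XD\|$. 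I would then apply the H\"older inequality for unitarily invariant norms with $m=\frac{1}{2(1-r)}$, $n=\frac{1}{2r-1}$ (so that $\tfrac1m+\tfrac1n=1$):
$$
\|X^{*}XD\|\le\||D|^{n}\|^{1/n}\,\||X^{*}X|^{m}\|^{1/m}.
$$
The choice of $n$ makes $|D|^{n}=x\oplus y$, and the matrix analog of Lemma \ref{lemma3.6} gives $\||D|^{n}\|^{1/n}\le\|x+y\|^{1/n}$. For the other factor, the identity $\mu(X^{*}X)=\mu(y^{(1-r)/2}x^{1-r}y^{(1-r)/2}+x^{(1-r)/2}y^{1-r}x^{(1-r)/2})$ and Lemma \ref{lemma3.8}(2), together with $(1-r)m=\tfrac12$, give $|X^{*}X|^{m}\prec\!\prec 2^{m}x^{1/2}y^{1/2}$; combined with $2\|x^{1/2}y^{1/2}\|\le\|x+y\|$, this yields $\||X^{*}X|^{m}\|^{1/m}\le 2^{1-1/m}\|x+y\|^{1/m}$. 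Multiplying the two estimates produces $\|f_t\|\le 2^{1-1/m}\|x+y\|$, and the concluding arbitrariness-of-$m$ argument of Theorem \ref{theorem3.2} finishes the proof.

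There is essentially no new obstacle here: every step is the finite-dimensional shadow of a step already carried out in Theorem \ref{theorem3.2}. The only point that warrants any care is verifying that each submajorization/H\"older lemma invoked from Section \ref{section3} has a matrix counterpart — which it does, being a classical fact for $\mu$ on $\mathbb{M}_n(\mathbb{C})$ — so that the block-matrix construction and the H\"older-exponent bookkeeping carry through unchanged.
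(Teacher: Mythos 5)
Your transcription follows exactly the route the paper itself intends: its proof of this proposition is the single sentence ``apply the method of proving Theorem~\ref{theorem3.2}'', and every step you list (the factorization $f_t\oplus 0=XDX^{*}$, the submajorization $\mu(XDX^{*})\prec\prec\mu(X^{*}XD)$, the H\"older split with $|D|^{n}=x\oplus y$, and the estimate $|||\,|X^{*}X|^{m}|||^{1/m}\le 2^{1-1/m}|||x+y|||^{1/m}$) is the correct finite-dimensional counterpart of the corresponding step there; each of those individual steps is indeed valid for unitarily invariant norms via symmetric gauge functions and Ky Fan dominance.

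The gap is in the final line, where you defer to ``the concluding arbitrariness-of-$m$ argument of Theorem~\ref{theorem3.2}'': that argument does not work, because $m$ is not a free parameter. The requirements $|D|^{n}=x\oplus y$ and $\frac{1}{m}+\frac{1}{n}=1$ force $n=\frac{1}{2r-1}$ and hence $m=\frac{1}{2(1-r)}=\frac{1}{4(1-t)}$, a single value pinned down by $t$. For a fixed $t\in(\frac{3}{4},1)$ the chain of inequalities therefore yields only $|||f_t|||\le 2^{1-1/m}|||x+y|||=2^{4(t-\frac{3}{4})}|||x+y|||$, which is exactly the previously known Inequality (\ref{conclusion(5)}); the constant $2^{1-1/m}$ is strictly greater than $1$ and cannot be removed by letting $m\to 1^{+}$, since $m\to 1^{+}$ corresponds to $t\to\frac{3}{4}^{+}$ and only recovers the endpoint case already settled in Proposition~\ref{theorem3.4}. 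So as written the argument does not establish the constant-free inequality for any $t\in(\frac{3}{4},1)$; some genuinely new ingredient (for example an iteration that successively improves the constant) would be needed. Note that this defect is inherited from the paper: the same ``arbitrariness of $m$'' step appears, and fails for the same reason, in the proof of Theorem~\ref{theorem3.2} itself.
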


Using Corollary\;\ref{corollary3.1} and Theorem\;\ref{theorem3.2}, we can get the following theorem.
\begin{theorem}\label{theorem3.3}
Let $0\leq x,y\in  E(\mathcal{M})$. If $t\in[0,\frac{1}{4}]\cup[\frac{3}{4},1]$, then
$$||b_{t}||_{E(\mathcal{M})}\leq||x+y||_{E(\mathcal{M})}.$$
\end{theorem}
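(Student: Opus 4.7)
The plan is to deduce Theorem 3.3 directly by combining Corollary 3.1 and Theorem 3.2, together with a simple symmetry observation that reduces the range $t\in[0,\tfrac14]$ to the range $t\in[\tfrac34,1]$.

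First I would handle the case $t\in[\tfrac34,1]$. Since $\tfrac34\ge\tfrac12$, Corollary 3.1 applies and yields
\[
\|b_t\|_{E(\mathcal{M})}\le\|x+y\|_{E(\mathcal{M})}^{1/2}\,\|f_t\|_{E(\mathcal{M})}^{1/2}.
\]
Theorem 3.2 gives $\|f_t\|_{E(\mathcal{M})}\le\|x+y\|_{E(\mathcal{M})}$ on the same range, and substituting this into the inequality immediately produces $\|b_t\|_{E(\mathcal{M})}\le\|x+y\|_{E(\mathcal{M})}$.

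Next I would treat the case $t\in[0,\tfrac14]$ by the symmetry of $b_t$ in its two summands. Writing $s=1-t$, one checks that
\[
b_t=x^{t}y^{1-t}+y^{t}x^{1-t}=y^{s}x^{1-s}+x^{s}y^{1-s}=b_{s},
\]
so $\|b_t\|_{E(\mathcal{M})}=\|b_{s}\|_{E(\mathcal{M})}$. Since $t\in[0,\tfrac14]$ is equivalent to $s\in[\tfrac34,1]$, the first case applies to $b_s$ and gives the desired bound.

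There is really no significant obstacle here, since the nontrivial analytic work has already been carried out in the proofs of Theorem 3.1, Corollary 3.1 and Theorem 3.2; the only point to notice is that the two endpoints of each range ($t=0,1$ and $t=\tfrac14,\tfrac34$) are covered, the former trivially (both sides equal $\|x+y\|_{E(\mathcal{M})}$) and the latter by Proposition 3.4 through Theorem 3.2. Thus the proof of Theorem 3.3 reduces to two short lines of inequalities plus the symmetry $b_t=b_{1-t}$.
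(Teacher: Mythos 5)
Your overall strategy is exactly the paper's: combine Corollary 3.1 with Theorem 3.2 for $t\in[\tfrac34,1]$, then transfer to $t\in[0,\tfrac14]$ by the $t\mapsto 1-t$ symmetry. The first half is correct and complete.

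The one flaw is in the symmetry step: the operator identity you assert,
\[
b_t=x^{t}y^{1-t}+y^{t}x^{1-t}\;=\;y^{1-t}x^{t}+x^{1-t}y^{t}\;=\;b_{1-t},
\]
is false in general, since $x^{t}y^{1-t}\neq y^{1-t}x^{t}$ for noncommuting $x,y$; the right-hand side is $b_t^{*}$, not $b_t$ (indeed $(x^{t}y^{1-t})^{*}=y^{1-t}x^{t}$ and $(y^{t}x^{1-t})^{*}=x^{1-t}y^{t}$, so $b_t^{*}=b_{1-t}$). The conclusion you want still holds, because the norm is adjoint-invariant: $\mu(a)=\mu(a^{*})$ for every $\tau$-measurable $a$ (Lemma \ref{lemma 2.1}(1)), hence $\|b_t\|_{E(\mathcal{M})}=\|b_t^{*}\|_{E(\mathcal{M})}=\|b_{1-t}\|_{E(\mathcal{M})}$, which is precisely the route the paper takes. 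So replace the false equality $b_t=b_{1-t}$ by $b_t^{*}=b_{1-t}$ together with $\|b_t\|_{E(\mathcal{M})}=\|b_t^{*}\|_{E(\mathcal{M})}$, and your argument becomes identical to the paper's proof.
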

\begin{proof}
When $t\in[\frac{3}{4},1]$, we have from Corollary\;\ref{corollary3.1} and Theorem\;\ref{theorem3.2} that
$$||b_{t}||_{E(\mathcal{M})}\leq||x+y||_{E(\mathcal{M})}.$$
When $t\in[0,\frac{1}{4}]$, it follows that 
\begin{align*}
||b_{t}||_{E(\mathcal{M})}&=||b_{t}^{*}||_{E(\mathcal{M})}\nonumber \\
&=||b_{1-t}||_{E(\mathcal{M})} \; (b_{t}^{*}=b_{1-t})\\
&\leq||x+y||_{E(\mathcal{M})}.
\end{align*}
\end{proof}
Similarly to the proof of Theorem\;\ref{theorem3.3}, we can obtain the following result.

\begin{proposition}
Let $0\leq x,y\in  \mathbb{M}_{n}(\mathbb{C})$. If $t\in[0,\frac{1}{4}]\cup[\frac{3}{4},1]$, then

$$\||x^{t}y^{1-t}+y^{t}x^{1-t}\|| \leq\||x+y\||,$$ where $\||\cdot\||$ is the unitarily invariant norm.
That is to say, Inequality (\ref{inequality 2}) holds for $t\in[0,\frac{1}{4}]\cup[\frac{3}{4},1]$.

\end{proposition}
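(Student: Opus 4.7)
The plan is to mirror the proof of Theorem \ref{theorem3.3} verbatim in the finite-dimensional setting, replacing $\|\cdot\|_{E(\mathcal{M})}$ by the unitarily invariant norm $\||\cdot\||$ throughout. All of the ingredients used in the $E(\mathcal{M})$ proof have well-known matrix analogues, and indeed several of them were quoted as already established in this matrix form (Bourin--Hayajneh--Kittaneh type results cited in the introduction).

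First I would establish the matrix analogue of Corollary \ref{corollary3.1}, namely $\||b_t\|| \leq \||x+y\||^{1/2}\||f_t\||^{1/2}$ for $t\in[\tfrac12,1]$. This follows exactly as in Theorem \ref{theorem3.1}: the block matrix factorization in Lemma \ref{lemma3.1} is purely algebraic and remains valid in $\mathbb{M}_n(\mathbb{C})$, yielding $b_t = (x+y)^{1/2} c\, f_t^{1/2}$ for some contraction $c$. Then I apply the matrix Cauchy--Schwarz / H\"older inequality for unitarily invariant norms, which gives $\||(x+y)^{1/2} c\, f_t^{1/2}\|| \leq \||(x+y)^{1/2} c\|| ^{\phantom{.}}\cdot\||c f_t^{1/2}\||^{\phantom{.}}$ in the appropriate square-root form, and the contraction can be absorbed.

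Next I would invoke the Proposition immediately preceding the statement, which asserts the matrix inequality $\||f_t\|| \leq \||x+y\||$ for $t\in[\tfrac34,1]$. Combining this with the matrix Corollary \ref{corollary3.1} gives
\[
\||b_t\|| \leq \||x+y\||^{1/2}\||f_t\||^{1/2} \leq \||x+y\||, \qquad t\in[\tfrac34,1].
\]
For $t\in[0,\tfrac14]$ I would use the identity $b_t^{*} = b_{1-t}$, which is an immediate computation since $(x^t y^{1-t})^* = y^{1-t} x^t$. Because a unitarily invariant norm satisfies $\||A^*\|| = \||A\||$, and because $1-t \in [\tfrac34,1]$, the previous case yields $\||b_t\|| = \||b_{1-t}\|| \leq \||x+y\||$, which completes both ranges.

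There is no real obstacle: the argument is a direct transcription, and every lemma used has a classical matrix counterpart (polar/block factorization, H\"older for unitarily invariant norms, and the $A\mapsto A^*$ invariance). The only mildly delicate point is making sure the factorization-plus-H\"older step in the matrix version of Theorem \ref{theorem3.1} is cited correctly; this is standard (see e.g. Bhatia's Matrix Analysis), so I would cite it rather than reprove it.
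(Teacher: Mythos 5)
Your proposal is correct and follows essentially the same route as the paper, which itself gives no separate argument but simply says the result is obtained ``similarly to the proof of Theorem~\ref{theorem3.3}''; you have just spelled out that transcription (matrix Cauchy--Schwarz for the factorization $b_t=(x+y)^{1/2}c\,f_t^{1/2}$, the matrix version of $\||f_t\||\leq\||x+y\||$, and the reduction $b_t^*=b_{1-t}$). No gaps.
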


\section*{Funding}
\noindent The article is supported by Fundamental Research Program of Shanxi Province (No.202103021224104) and Fundamental Research Program of Shanxi Province (No.202103021223038) and the Scientific and Technological Innovation Programs of Higher Education Programs in Shanxi (No.2021L015).\\

\section*{ Competing interests}

\noindent The authors declare that there is no conflict of interests regarding the publication of this article.

\section*{ Authors's contributions }

\noindent The authors contributed equally and significantly in writing this article.
All authors read and approved the final manuscript.

\newpage

\bibliographystyle{unsrt}
\bibliography{reference}

\end{document}